\documentclass{amsart}

\usepackage[T1]{fontenc}
\usepackage[utf8]{inputenc}
\usepackage{mathscinet}
\usepackage{hyperref}
\usepackage{url}
\usepackage[arrow,cmtip,matrix]{xy}

\newcommand{\NoOp}[1]{}

\newcommand\bmu{\boldsymbol{\mu}}

\newcommand\cE{\mathcal{E}}

\newcommand\fppf{\mathit{fppf}}

\newcommand\Gm{\mathbf{G}_{\mathrm{m}}}
\newcommand\Gmover[1]{\mathbf{G}_{\mathrm{m},#1}}
\renewcommand\H{\mathrm{H}}
\newcommand\HH{\H_{\mathrm{H}}}  

\newcommand\op{{\mathrm{op}}}
\newcommand\Sel{\mathrm{Sel}}
\newcommand\Tors{\mathcal{T}}
\newcommand\Z{\mathbf{Z}}

\newcommand\blank{\kern12pt}
\newcommand\isom{\stackrel\sim\longrightarrow}
\newcommand\morphism[1]{\stackrel{#1}\longrightarrow}
\newcommand\tensor[1]{\mathbin{\mathop{\otimes}\limits_{#1}}}
\newcommand\ses[3]{1\longrightarrow #1\longrightarrow #2
\longrightarrow #3\longrightarrow1}

\DeclareMathOperator\Ext{Ext}
\DeclareMathOperator\Hom{Hom}
\DeclareMathOperator\id{id}
\DeclareMathOperator\Pic{Pic}
\DeclareMathOperator\Spec{Spec}

\newtheorem{theorem}{Theorem}[section]

\newtheorem{lemma}[theorem]{Lemma}

\theoremstyle{definition}
\newtheorem{assumption}[theorem]{Assumption}
\newtheorem{definition}[theorem]{Definition}
\newtheorem{example}[theorem]{Example}

\theoremstyle{remark}
\newtheorem{remark}[theorem]{Remark}

\numberwithin{equation}{section}

\title{Extensions and torsors for finite group schemes}

\author{Peter Bruin}

\begin{document}

\begin{abstract}
We give an explicit description of the category of central extensions
of a group scheme by a sheaf of Abelian groups.  Based on this, we
describe a framework for computing with central extensions of finite
commutative group schemes, torsors under such group schemes and groups
of isomorphism classes of these objects.
\end{abstract}

\maketitle

\section{Introduction}

Let $G$ be a finite locally free group scheme over a scheme~$S$.  We
describe the category of central extensions of $G$ by a commutative
\emph{fppf} group scheme $F$ affine over~$S$, and for $G$ commutative
also the category of $G$-torsors over~$S$, in a way that is suitable
for explicit calculations.

Under certain computational assumptions (which are fulfilled, for
example, if $K$ is a number field, $S$ is the spectrum of the ring of
$\Sigma$-integers in~$K$ with $\Sigma$ a finite set of places of~$K$,
and $F$ is itself finite locally free or $F=\Gm$), we give algorithms
for computing
\begin{itemize}
\item the extension class group $\Ext_S(G,F)$, i.e.\ the group of
  isomorphism classes of central extensions of $G$ by~$F$,
\end{itemize}
and for commutative $G$ also
\begin{itemize}
\item the subgroup of $\Ext_S(G,F)$ classifying commutative
  extensions, and
\item the torsor class group $\H^1(S_\fppf,G)$, i.e.\ the group of
  isomorphism classes of $G$-torsors over~$S$.
\end{itemize}
These algorithms ultimately reduce the problem to the computation of
unit groups and Picard groups of certain finite locally free
$S$-schemes.

\subsubsection*{Outline of the paper}

In \S\ref{sec:extension-data}, we introduce some preliminary notions
and define \emph{$F$-extension data} on a group scheme $G$ over a
scheme~$S$, where $F$ is a sheaf of Abelian groups $F$ on $S_\fppf$.
In \S\ref{sec:correspondence}, we show that central extensions of $G$
by~$F$ are classified by $F$-extension data on~$G$
(Theorem~\ref{th:equivalence}), and construct an exact sequence
relating the group $\Ext_S(G,F)$ to various cohomology groups
(Theorem~\ref{th:exact-seq}).  For $G$ finite locally free and
commutative, we show in \S\ref{sec:Ext-to-H1} how to use
$\Gm$-extension data on~$G$ to describe $G^*$-torsors over~$S$, making
a theorem of Chase explicit.  Finally, in \S\ref{sec:computational},
we show how the theory developed in this paper leads to algorithms for
computing the above objects in practice for a finite locally free
commutative group scheme over suitable base schemes, and we describe a
connection between our results and algorithms for computing Selmer
groups of elliptic curves.

\section{Extension data on a group scheme}

\label{sec:extension-data}

Let $S$ be a scheme, and let $G$ be a group scheme over~$S$.
We denote the group operation, identity and inverse morphisms of~$G$
by $m\colon G\times G\to G$, $e\colon S\to G$ and
$\iota\colon G\to G$.

Let $F$ be a sheaf of Abelian groups on $S_\fppf$.
We use multiplicative notation for~$F$ since important examples are
the multiplicative group or the group of $n$-th roots of unity for
some $n\ge1$.

For every $S$-scheme $X$, let $\Tors_F(X)$ be the category of
$F$-torsors on~$X_\fppf$.  We write $T\otimes T'$ for the contracted
product of two $F$-torsors $T$ and~$T'$, and $T^\vee$ for the dual of
an $F$-torsor~$T$.

\subsection{Some simplicial definitions}

\label{simplicial}

For all $k\ge0$, we write $G^k$ for the $k$-fold fibre power of $G$
over~$S$.  We number the factors by $\{0,1,\ldots,k-1\}$ and write
$p_i$ for the projection on the $i$-th coordinate.

The morphisms
\[
p_0, p_1, m\colon G^2\to G
\]
give rise to a group homomorphism
\begin{align*}
d^1\colon F(G)&\longrightarrow F(G^2)\\
x&\longmapsto (p_1^*x)(m^*x)^{-1}(p_0^*x).
\end{align*}
The above morphisms also give rise to functors
\[
p_0^*, p_1^*, m^*\colon\Tors_F(G)\longrightarrow\Tors_F(G^2)
\]
and hence a functor
\begin{align*}
\delta^1\colon\Tors_F(G)&\longrightarrow\Tors_F(G^2)\\
T&\longmapsto p_1^*T\otimes(m^*T)^\vee\otimes p_0^*T.
\end{align*}
Similarly, we consider the morphisms
\[
p_{0,1},p_{1,2},m_{0,1},m_{1,2}\colon G^3\to G^2
\]
defined by
\[
p_{0,1} = {\id_G}\times p_0,\quad
p_{1,2} = p_1\times{\id_G},\quad
m_{0,1} = m\times{\id_G},\quad
m_{1,2} = {\id_G}\times m.
\]
These give rise to a group homomorphism
\begin{align*}
d^2\colon F(G^2)&\longrightarrow F(G^3)\\
x&\longmapsto (p_{1,2}^*x)(m_{0,1}^*x)^{-1}
(m_{1,2}^*x)(p_{0,1}^*x)^{-1}.
\end{align*}
The above morphisms also give rise to functors
\[
p_{0,1}^*, p_{1,2}^*, m_{0,1}^*, m_{1,2}^*\colon
\Tors_F(G^2)\longrightarrow\Tors_F(G^3)
\]
and hence a functor
\begin{align*}
\delta^2\colon\Tors_F(G^2)&\longrightarrow\Tors_F(G^3)\\
T&\longmapsto p_{1,2}^*T\otimes(m_{0,1}^*T)^\vee
\otimes m_{1,2}T\otimes (p_{0,1}^*T)^\vee.
\end{align*}

The morphisms $d^1$ and~$d^2$ are part of the \emph{Hochschild
complex}
\begin{equation}
\label{eq:hochschild-complex}
F(S)\morphism{d^0} F(G)\morphism{d^1} F(G^2)\morphism{d^2}
F(G^3)\morphism{d^3}\cdots,
\end{equation}
whose cohomology groups are the \emph{Hochschild cohomology groups}
of~$G$ with coefficients in~$F$.

For every $F$-torsor $T$ on~$G$, there is a canonical trivialisation
\[
\kappa_T\colon F_{G^3}\isom\delta^2(\delta^1 T).
\]

\subsection{Extension data}

The following definition forms the basis for our computational
framework for group scheme extensions.

\begin{definition}
Let $G$ be a group scheme over a scheme~$S$, and let $F$ be a sheaf of
Abelian groups on $S_\fppf$.
An \emph{$F$-extension datum} on~$G$ is a pair $(T,\tau)$ where $T$ is
an $F$-torsor on~$G$ and
\[
\tau\colon F_{G^2}\isom\delta^1 T
\]
is an isomorphism of $F$-torsors on $G^2$ such that the triangle
\begin{equation}
\label{eq:triangle}
\xymatrix{
F_{G^3} \ar[r]^\sim \ar[dr]^\sim_{\kappa_T}&
\delta^2 F_{G^2} \ar[d]_\sim^{\delta^2\tau}\\
& \delta^2(\delta^1 T)}
\end{equation}
commutes.  Given two $F$-extension data $(T,\tau)$ and $(T',\tau')$
on~$G$, an \emph{isomorphism} from $(T,\tau)$ to $(T',\tau')$ is an
isomorphism
\[
\phi\colon T\isom T'
\]
of $F$-torsors on~$G$ such that the triangle
\begin{equation}
\label{eq:triangle-iso}
\xymatrix{
  F_{G^2} \ar[r]^{\tau}_\sim \ar[dr]^\sim_{\tau'}&
  \delta^1T\ar[d]^{\delta^1\phi}_\sim\\
  & \delta^1T'}
\end{equation}
commutes.  The \emph{groupoid of $F$-extension data on~$G$}, denoted
by $\cE(G,F)$, is the groupoid in which the objects are the extension
data for $(G,F)$ and the isomorphisms are as above.
\end{definition}

Note that the contracted product makes $\cE(G,F)$ into a symmetric
monoidal category.  The neutral object is $(F_G,\tau_0)$ where
$\tau_0\colon F_{G^2}\isom\delta^1 F_G$ is the canonical isomorphism.
In particular, we have an Abelian group of isomorphism classes of
objects of $\cE(G,F)$.

\section{Correspondence between extension data and group scheme extensions}

\label{sec:correspondence}

\subsection{The extension datum defined by a group scheme extension}

\label{ext-to-datum}

From now on, we assume that the sheaf $F$ is representable,
\emph{fppf} and affine over~$S$.  Then every $F$-torsor over an
$S$-scheme $X$ is representable, \emph{fppf} and affine over~$X$ (see
for example \cite[\S17]{Oort}).

\begin{remark}
The assumption that $F$ is representable, \emph{fppf} and affine
over~$S$ is made for convenience and can probably be weakened or
removed.
\end{remark}

Consider a central extension
\[
1\longrightarrow F\morphism{j}E\morphism{q}G\longrightarrow 1
\]
of sheaves of groups on $S_\fppf$.  Then $q$ makes $E$ into an
$F$-torsor over~$G$, so $E$ is representable.  Let $m_E\colon
E\times_S E\to E$ and $\iota_E\colon E\to E$ be the multiplication and
inverse morphisms.

We have a commutative diagram
\[
\xymatrix{
  E\times_S E \ar[r]^{m_E} \ar[d]_{q\times q}& E \ar[d]^q\\
  G\times_S G\ar[r]^m& G.}
\]
There is a canonical morphism
\[
E\times_S E\to p_0^*E \otimes p_1^*E
\]
of $G^2$-schemes.  It is straightforward to check that $m_E$ induces
an $F$-equivariant morphism
\[
\nu_E\colon  p_0^*E \otimes p_1^*E \to m^*E,
\]
which is automatically an isomorphism because both sides are
$F$-torsors.  We therefore obtain a trivialisation
\[
\tau_E\colon F_{G^2} \isom \delta^1 E.
\]
By associativity of the group operation of~$E$, we have a commutative
diagram
\[
\xymatrix{
  E\times_S E\times_S E \ar[r]^{\mkern28mu m_E\times\id} \ar[d]_{{\id}\times m_E}&
  E\times_S E \ar[d]^{m_E}\\
  E\times_S E \ar[r]_{m_E}& E}
\]
lying over the corresponding diagram for~$G$.  The commutativity of
this diagram is equivalent to the statement that the isomorphisms
\begin{align*}
p_0^*E\otimes p_1^*E\otimes p_2^*E&\isom
p_{0,1}^*(p_0^*E\otimes p_1^*E)\otimes p_2^*E
\morphism{p_{0,1}^*\nu_E\otimes\id}
p_{0,1}^*m^*E\otimes p_2^*E\\&\isom
m_{0,1}^*(p_0^*E\otimes p_1^*E)
\morphism{m_{0,1}^*\nu_E}
m_{0,1}^*m^*E
\end{align*}
and
\begin{align*}
p_0^*E\otimes p_1^*E\otimes p_2^*E&\isom
p_0^*E\otimes p_{1,2}^*(p_0^*E\otimes p_1^*E)
\morphism{{\id}\otimes p_{1,2}^*\nu_E}
p_0^*E\otimes p_{1,2}^*m^*E\\&\isom
m_{1,2}^*(p_0^*E\otimes p_1^*E)
\morphism{m_{1,2}^*\nu_E}
m_{1,2}^*m^*E
\end{align*}
coincide with each other under the canonical identification of
$m_{0,1}^*m^*E$ and $m_{1,2}^*m^*E$ given by the associativity
of~$G$.  This is in turn equivalent to the commutativity of the
triangle \eqref{eq:triangle} for $T=E$ and $\tau=\tau_E$.  We conclude
that $(E,\tau_E)$ is an $F$-extension datum on~$G$.

\subsection{The group scheme extension defined by an extension datum}

\label{datum-to-ext}

Conversely, let $(T,\tau)$ be an $F$-extension datum on~$G$.  As
remarked above, $T$ is representable.  Let $q\colon T\to G$ be the
structure map.  We will use $\tau$ to make $T$ into a group scheme
over~$S$ equipped with a homomorphism $j_\tau\colon F\to T$ such that
$T$ becomes a central extension of $G$ by~$F$.  This extends the
well-known construction of a central extension of an abstract
group~$\Gamma$ by an Abelian group~$A$ from a 2-cocycle.

The trivialisation~$\tau$ induces an isomorphism $\nu_\tau\colon
p_0^*T\otimes p_1^* T\to m^*T$ of $F$-torsors on $G\times G$, and
hence a morphism
\begin{equation}
\label{eq:m-tau}
m_\tau\colon T\times_S T\to T
\end{equation}
of $S$-schemes fitting in a commutative diagram
\[
\xymatrix{
  T\times_S T \ar[r]^{m_\tau} \ar[d]_{q\times q}& T \ar[d]^q\\
  G\times_S G\ar[r]^m& G.}
\]
By the argument in \S\ref{ext-to-datum}, $m_T$ is an associative
operation on~$T$.  Pulling back $\tau$ via the morphism
\[
(e,e)\colon S\to G^2
\]
gives a trivialisation
\begin{equation}
\label{eq:tau-ee}
\tau(e,e)\colon F\isom e^* T
\end{equation}
and hence an $F$-equivariant closed immersion $j_\tau\colon F\to T$
fitting in a commutative diagram
\[
\xymatrix{
F \ar[r]^{j_\tau} \ar[d] & T \ar[d]^q\\
S \ar[r]^e & G\rlap.}
\]

Next, pulling back $\tau$ via the morphisms
\[
i_0 = {\id}\times e\colon G\to G^2,\quad
i_1 = e\times{\id}\colon G\to G^2
\]
gives trivialisations
\[
\tau(\blank,e)\colon F_G\isom p^*e^* T,\quad
\tau(e,\blank)\colon F_G\isom p^*e^* T
\]
It is straightforward to check that pulling back the commutative
triangle~\eqref{eq:triangle} by the morphisms
\[
{\id}\times e\times e,
e\times e\times{\id}
\colon G\to G^3
\]
implies that both $\tau(\blank,e)$ and $\tau(e,\blank)$
are equal to the pull-back of $\tau(e,e)$ via~$p$.
From this it follows that the compositions
\[
F\times_S T\morphism{j_\tau\times{\id}} T\times_S T\morphism{m_\tau}T
\quad\text{and}\quad
T\times_S F\morphism{{\id}\times j_\tau} T\times_S T\morphism{m_\tau}T
\]
agree with the maps given by the $F$-torsor structure on~$T$; in
particular, $j_\tau$ sends the identity section of~$F$ to a two-sided
identity section for~$m_\tau$, and $j_\tau$ is a homomorphism of
monoid schemes.

Finally, pulling back $\tau$ via the morphisms
\[
(\iota,\id)\colon G\to G^2,\quad
(\id,\iota)\colon G\to G^2
\]
gives trivialisations
\begin{align*}
(\id,\iota)^*\tau\colon F_G&\isom
\iota^* T\otimes(p^*e^* T)^\vee\otimes T,\\
(\iota,\id)^*\tau\colon F_G&\isom
T\otimes(p^*e^* T)^\vee\otimes\iota^* T.
\end{align*}
Via the isomorphism $\tau(e,e)$ from~\eqref{eq:tau-ee},
we obtain isomorphisms
\[
\phi_0,\phi_1\colon T^\vee\isom\iota^* T
\]
of $F$-torsors on~$G$.  Composing these with the canonical isomorphism
$T\isom T^\vee$ of $G$-schemes (not an isomorphism of $F$-torsors
since it inverts the action of~$F$) gives isomorphisms
$T\isom\iota^*T$ over~$G$, hence automorphisms $T\isom T$ lying over
the automorphism $\iota$ of~$G$.  By construction, these are left
inverse and right inverse operations on~$T$, and they are equal by the
associativity of~$m_\tau$.  Thus we have equipped $T$ with the
desired structure of central extension of $G$ by~$F$.

\subsection{An equivalence of categories and an exact sequence}

\label{equiv-exact}

\begin{theorem}
\label{th:equivalence}
Let $G$ be a group scheme over a scheme~$S$, and let $F$ be a sheaf of
Abelian groups on~$G$.  Assume that $F$ is representable, $\fppf$ and
affine over~$S$.  Then there is a canonical equivalence of categories
between $\cE(G,F)$ and the category of central extensions of $G$
by~$F$.
\end{theorem}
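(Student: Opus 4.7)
The plan is to package the constructions of \S\ref{ext-to-datum} and \S\ref{datum-to-ext} into a pair of functors $\Phi$ and $\Psi$ between the category of central extensions of $G$ by~$F$ and~$\cE(G,F)$, and then to verify that they are mutually quasi-inverse. On objects I set $\Phi(E)=(E,\tau_E)$ and $\Psi(T,\tau)=T$ equipped with the multiplication~$m_\tau$, the unit section $j_\tau$ and the inverse built in~\S\ref{datum-to-ext}; the work done in those subsections already shows that both functors land in the stated target categories.

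Next I would check functoriality on morphisms. A morphism $\phi\colon E\to E'$ of central extensions is, in particular, an $F$-equivariant map over~$G$, and its compatibility with $m_E$ and~$m_{E'}$ translates directly into the commutativity of~\eqref{eq:triangle-iso} for $\tau_E$ and~$\tau_{E'}$; this defines $\Phi(\phi)$. Conversely, given an isomorphism $\phi\colon(T,\tau)\isom(T',\tau')$ in~$\cE(G,F)$, the commutativity of~\eqref{eq:triangle-iso}, read through the bijection between trivialisations and $F$-equivariant isomorphisms $p_0^*T\otimes p_1^*T\isom m^*T$, says exactly that $\phi\circ m_\tau=m_{\tau'}\circ(\phi\times\phi)$; pulling this back along $(e,e)$ gives $\phi\circ j_\tau=j_{\tau'}$, so $\phi$ underlies a morphism of central extensions and defines $\Psi(\phi)$.

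I would then verify that the two composites equal the identity on the nose, yielding in particular the required natural isomorphisms. Starting from $(T,\tau)$, the underlying $F$-torsor of $\Psi(T,\tau)$ is $T\to G$ unchanged, and the isomorphism $\nu$ attached to~$m_\tau$ in \S\ref{ext-to-datum} is tautologically~$\nu_\tau$, so $\tau_{\Psi(T,\tau)}=\tau$. Conversely, for a central extension~$E$, the multiplication $m_{\tau_E}$ is by definition reconstructed from $\nu_{\tau_E}=\nu_E$, and $j_{\tau_E}$ is obtained by pulling back $\tau_E$ along $(e,e)$, which recovers the identity section of~$E$; hence $\Psi\Phi(E)=E$ as central extensions of $G$ by~$F$.

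The main obstacle is ensuring that the cocycle condition~\eqref{eq:triangle} is \emph{equivalent} to the associativity of~$m_\tau$, not merely implied by it. The calculation in~\S\ref{ext-to-datum} established one direction starting from an a priori associative~$m_E$; for $\Psi$ one needs the converse, so one must trace through the same chain of canonical identifications on $p_0^*T\otimes p_1^*T\otimes p_2^*T$ and its pull-backs via $p_{0,1}, p_{1,2}, m_{0,1}, m_{1,2}$, and check that the two resulting associators agree precisely when~\eqref{eq:triangle} commutes. The analogous reversibility for morphisms, needed to make $\Psi$ a functor, is handled by the same kind of unwinding. Once this bookkeeping is in place, the rest of the proof is purely formal.
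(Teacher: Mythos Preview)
Your proposal is correct and follows essentially the same route as the paper: both reduce the equivalence to the constructions of \S\ref{ext-to-datum} and \S\ref{datum-to-ext} on objects, and then observe that for an isomorphism $\phi\colon T\isom T'$ of $F$-torsors, compatibility with $m_\tau$ and~$m_{\tau'}$ is exactly the commutativity of~\eqref{eq:triangle-iso}, with compatibility with $j_\tau,j_{\tau'}$ following automatically. Your ``main obstacle''---that \eqref{eq:triangle} be \emph{equivalent} to associativity of~$m_\tau$ rather than merely implied by it---is in fact already asserted as an equivalence in \S\ref{ext-to-datum} (``is equivalent to\ldots{} is in turn equivalent to''), so no extra bookkeeping is needed beyond what the paper records.
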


\begin{proof}
By the arguments in \S\ref{ext-to-datum} and \S\ref{datum-to-ext}, the
possible group scheme structures on an $F$-torsor $T$ on~$G$
correspond bijectively to the trivialisations $\tau\colon
F_{G^2}\isom\delta^1 T$ making the triangle~\ref{eq:triangle} commute.
This gives a canonical bijection between central extensions of $G$
by~$F$ and objects of $\cE(G,F)$.

Given extension data $(T,\tau)$ and $(T',\tau')$, an isomorphism
between the corresponding extensions is an isomorphism $T\to T'$ of
$F$-torsors on~$G$ that is compatible with the group structures
$m_\tau$ and $m_{\tau'}$ as well as the inclusions $j_\tau\colon
F\to T$ and $j_{\tau'}\colon F\to T'$.  The first condition
corresponds to the commutativity of the
diagram~\eqref{eq:triangle-iso}, and the second condition follows from
the first by the definition of $j_\tau$ and~$j_{\tau'}$.
\end{proof}

Via the equivalence of Theorem~\ref{th:equivalence}, we will identify
the group $\Ext_S(G,F)$ of isomorphism classes of central extensions
of $G$ by~$F$ with the group of isomorphism classes of objects of
$\cE(G,F)$.  Using this identification, we will embed $\Ext_S(G,F)$
into an exact sequence that will allow us to compute this group in
various cases.

We first consider $F$-extension data $(T,\tau)$ such that $T$ is a
trivial $F$-torsor.  Given an element $u\in F(G^2)$ with $d^2u=1$, one
obtains an extension datum $(F,\tau)$ such that $\tau$ corresponds to
multiplication by~$u$ under the canonical identification
$F_{G^2}\isom\delta^1 F$.  This gives an injective homomorphism
\[
\HH^2(G,F)\longrightarrow\Ext_S(G,F)
\]
whose image is the group of isomorphism classes of extension data
$(T,\tau)$ such that the $F$-torsor $T$ is trivial; see \cite[expos\'e
  XVII, appendice~I]{SGA3II}.

Next, we define $K(G,F)$ to be the kernel of the map
\begin{align*}
d^1\colon\H^1(G_\fppf,F)&\longrightarrow\H^1(G^2_\fppf,F)
\end{align*}
induced by the functor~$\delta^1$.  By our identification of
$\Ext_S(G,F)$ with the group of isomorphism classes of objects of
$\cE(G,F)$, there is a canonical group homomorphism
\[
\Ext_S(G,F)\longrightarrow K(G,F)
\]
sending the class of $(T,\tau)$ to the class of~$T$.

Furthermore, we construct a homomorphism
\begin{equation}
\label{eq:trg}
\xi_{G,F}\colon K(G,F)\longrightarrow\HH^3(G,F)
\end{equation}
as follows.  Let $x$ be an element of $K(G,F)$, represented by an
$F$-torsor $T$ on~$G$ such that $\delta^1 T$ is trivial.  Choose a
trivialisation
\[
\tau\colon F_{G^2}\isom\delta^1 T.
\]
We then define $u_\tau$ to be the element of $F(G^3)$ such that the
composition
\[
F_{G^3}\isom \delta^2 F_{G^2} \morphism{\delta^2\tau} \delta^2(\delta^1 T)
\morphism{\kappa_T^{-1}} F_{G^3}
\]
equals multiplication by~$u_\tau$.  Then we have $d^2 u=1$, and we
define $\xi_{G,F}(x)$ to be the class of $u_\tau$ in $\HH^2(G,F)$.
Since a different choice for $T$ or~$\tau$ changes $u_\tau$ by an
element in the image of $d^2$, the map $\xi_{G,F}$ is a well-defined
group homomorphism.

\begin{theorem}
\label{th:exact-seq}
There is an exact sequence
\[
1\longrightarrow\HH^2(G,F)\longrightarrow\Ext_S(G,F)
\longrightarrow K(G,F)\morphism{\xi_{G,F}}\HH^3(G,F).
\]
\end{theorem}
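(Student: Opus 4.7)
The plan is to verify exactness at each of the three positions separately, using the explicit descriptions of all maps set up above. First, the map $\Ext_S(G,F)\to K(G,F)$ sending $[(T,\tau)]\mapsto[T]$ is well-defined because an isomorphism of extension data restricts to an isomorphism of the underlying $F$-torsors, and its image lies in $K(G,F)$ because $\tau$ trivialises $\delta^1 T$. For exactness at $\HH^2(G,F)$, I would check that two cocycles $u,u'\in F(G^2)$ with $d^2u=d^2u'=1$ give rise to isomorphic extension data $(F_G,\tau_u)$ and $(F_G,\tau_{u'})$ precisely when they differ by a coboundary: any isomorphism between them is multiplication by some $a\in F(G)$, and the commutativity of~\eqref{eq:triangle-iso} unwinds to the identity $u'=u\cdot d^1a$.

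For exactness at $\Ext_S(G,F)$, the kernel of $\Ext_S(G,F)\to K(G,F)$ consists of classes $[(T,\tau)]$ with $T$ a trivial $F$-torsor. Fixing an isomorphism $T\cong F_G$, the trivialisation $\tau$ becomes multiplication by some $u\in F(G^2)$, and the commutativity of the triangle~\eqref{eq:triangle} translates to $d^2u=1$. Hence this kernel is exactly the image of $\HH^2(G,F)$.

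For exactness at $K(G,F)$, let $x\in K(G,F)$ be represented by an $F$-torsor $T$ on~$G$ with $\delta^1T$ trivial, and fix any trivialisation $\tau\colon F_{G^2}\isom\delta^1T$. By the definition of an extension datum, $x$ lifts to $\Ext_S(G,F)$ if and only if some choice of $\tau$ makes~\eqref{eq:triangle} commute, that is, if and only if $u_\tau=1$ for some~$\tau$. Any other trivialisation has the form $\tau\cdot v$ for some $v\in F(G^2)$, and a direct computation using the definitions of $\delta^2$ and $d^2$ yields $u_{\tau\cdot v}=u_\tau\cdot d^2v$. Consequently $\xi_{G,F}(x)=[u_\tau]\in\HH^3(G,F)$ vanishes precisely when some choice of $\tau$ gives $u_\tau=1$, which is exactly the condition that $x$ lifts.

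The main obstacle is the cocycle identity $u_{\tau\cdot v}=u_\tau\cdot d^2v$; this simultaneously underpins the well-definedness of $\xi_{G,F}$ already asserted in the construction and the key step of exactness at $K(G,F)$. It is a direct but slightly intricate diagram chase comparing the actions of $\delta^2\tau$ and $\delta^2(\tau\cdot v)$ on $\delta^2 F_{G^2}$, which by definition of $d^2$ differ exactly by multiplication by $d^2v$. Once that identity is in hand, all three exactness verifications reduce to unwinding the definitions of $\cE(G,F)$ and of the Hochschild coboundary maps.
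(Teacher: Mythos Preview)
Your proposal is correct and follows essentially the same approach as the paper: both verify exactness at the three positions by unwinding the definitions, and the key step at $K(G,F)$ rests on precisely the identity $u_{\tau\cdot v}=u_\tau\cdot d^2v$ that you highlight (the paper uses it implicitly when it says ``dividing $\tau$ by~$y$'' to obtain $u_{\tau'}=1$). Your write-up is more explicit about the first two exactness checks, which the paper dispatches by reference to the preceding discussion, but the underlying argument is the same.
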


\begin{proof}
We define a sequence using the maps constructed above.  Exactness at
$\HH^2(G,F)$ and $\Ext_S(G,F)$ follows from the above arguments.  It
remains to show exactness at $K(G,F)$.  Given an $F$-extension datum
$(T,\tau)$ on~$G$, the element $u_\tau$ equals 1 by the commutativity
of~\eqref{eq:triangle}, so the class of $T$ is in the kernel
of~$\xi_{G,F}$.  Conversely, let $x\in K(G,F)$ be in the kernel of $\xi_{G,F}$.
Choosing $T$ and~$\tau$ as in the construction of $\xi_{G,F}$, the element
$u_\tau\in F(G^3)$ is then in the image of~$d^2$, say $u_\tau=d^2 y$
with $y\in F(G^2)$.  Dividing $\tau$ by~$y$, we obtain a
trivialisation $\tau'\colon F_{G^2}\isom\delta^1 T$ such that
$u_{\tau'}=1$, so the diagram~\eqref{eq:triangle} for~$\tau'$
commutes; therefore, $(T,\tau')$ is in $\cE(G,F)$ and maps to $x\in
K(G,F)$.  It follows that the kernel of~$\xi_{G,F}$ equals the image of
$\Ext_S(G,F)$ in $K(G,F)$.
\end{proof}

\begin{remark}
It is well known that extensions of an abstract group $\Gamma$ by a
$\Gamma$-module $A$ are classified by the group $\H^2(\Gamma,A)$ where
$\H^i(\Gamma,\blank)$ is the $i$-th derived functor of the functor of
$\Gamma$-invariants.  For a group scheme $G$ over a scheme~$S$, there
is a functor of $G$-invariants defined for a sheaf $F$ of $G$-modules
by $\H^0(G,F)=F^G(S)$, where $F^G$ is the sheaf of $G$-invariants.
There is a homomorphism $\Ext_S(G,F)\to\H^2(G,F)$, but this is part of
a long exact sequence and is in general not an isomorphism; see
Demazure and Gabriel \cite[III.6.3.1]{Demazure-Gabriel}.
\end{remark}

\begin{remark}
It is tempting to try to construct the exact sequence of
Theorem~\ref{th:exact-seq} as the exact sequence of low-degree terms
arising from a spectral sequence with $E_1^{p,q}$-terms $\H^q(G^p,F)$
for $p\ge1$ and $q\ge0$.  The author has so far been able to construct
such a spectral sequence only in the case where $G$ is finite over~$S$
and the \emph{fppf} topology is replaced by the \'etale topology.
\end{remark}

\subsection{The subgroup of commutative extensions}

Given a group scheme $G$ over~$S$, let $\sigma_G\colon G^2\to G^2$ be
the involution switching the factors.  We have an obvious notion of
opposite group scheme $G^\op$ (replace the multiplication morphism
$m\colon G^2\to G$ by $m\circ\sigma_G$).  Given an extension
\[
\ses{F}{E}{G}
\]
we obtain a corresponding extension
\[
\ses{F}{E^\op}{G^\op}.
\]
Let $(T,\tau)$ be an $F$-extension datum on~$G$.  There is a canonical
isomorphism $\delta^1_{G^\op} T\isom \sigma_G^*(\delta^1T)$, where we
write $\delta^1_{G^\op}$ for the functor $\delta^1$ associated with
$G^\op$.  Let $\tau^\op\colon F_{G^2}\isom \delta^1_{G^\op} T$ be the
isomorphism making the diagram
\[
\xymatrix{F_{G^2} \ar[r]^{\tau^\op} \ar[d]^\sim& \delta^1_{G^\op} T \ar[d]^\sim\\
\sigma_G^*F_{G^2} \ar[r]^{\sigma_G^*\tau}& \sigma_G^*(\delta^1T)}
\]
commutative.  If $(T,\tau)$ defines the extension $E$ of~$G$, then
$(T,\tau^\op)$ defines the extension $E^\op$ of~$G^\op$.

Now suppose that $G$ is commutative, so $G=G^\op$.  Then $E$ is
commutative if and only if $\tau^\op=\tau$.
We use this to compute the subgroup $\Ext^1_S(G,F)$ of $\Ext_S(G,F)$
as follows.  We have a group homomorphism
\begin{equation}
\label{eq:Sigma}
\Sigma\colon\Ext_S(G,F)\longrightarrow F(G^2)
\end{equation}
sending the extension class defined by an extension datum $(T,\tau)$
to the ``commutator section'' $\Sigma(T,\tau)\in F(G^2)$ such that the
composed isomorphism
\[
F_{G^2}\morphism{\tau^\op}\delta^1 T\morphism{\tau^{-1}} F_{G^2}
\]
equals multiplication by $\Sigma(T,\tau)$.  Then $\Ext^1_S(G,F)$ is
the kernel of~$\Sigma$.

\subsection{Some results on $\bmu_n$-extension data}

\label{mu-extensions}

Let $n$ be a positive integer, and let $\bmu_n$ be the group scheme of
$n$-th roots of unity.  We now collect some results on $\bmu_n$-torsors
and central extensions by~$\bmu_n$ that will be used in
\S\ref{computing-H1}.

The groupoid $\Tors_{\bmu_n}(X)$ of $\bmu_n$-torsors over a scheme~$X$
is canonically equivalent to the following groupoid.  The objects are
pairs $(T,\lambda)$ where $T$ is a $\Gm$-torsor on~$X$ and
$\lambda\colon\Gmover{X}\isom T^{\otimes n}$ is an isomorphism of
$\Gm$-torsors.  The isomorphisms from $(T,\lambda)$ to $(T,\lambda')$
are the isomorphisms $\alpha\colon T\isom T'$ of $\Gm$-torsors
satisfying $\alpha^{\otimes n}\circ\lambda = \lambda'$.  The canonical
functor $\Tors_{\bmu_n}(X)\to\Tors_{\Gm}(X)$ obtained from the
inclusion $\bmu_n\to\Gm$ is given by sending $(T,\lambda)$ to~$T$.

Given a group scheme~$G$ over a scheme~$S$, a $\bmu_n$-extension datum
on~$G$ therefore consists of a $\bmu_n$-torsor $(T,\lambda)$ on~$G$ and
a trivialisation
\[
\tau\colon\Gmover{G}\isom\delta^1 T
\]
of $\Gm$-torsors such that the diagram
\[
\xymatrix{
\Gmover{G^2}\ar[r]^{\tau^{\otimes n}} \ar[d]_\sim&
(\delta^1 T)^{\otimes n} \ar[d]^\sim \\
\delta^1\Gmover{G}\ar[r]_{\delta^1\lambda}& \delta^1(T^{\otimes n})}
\]
commutes.

\begin{lemma}
\label{lemma:seq-mu}
There is a short exact sequence of Abelian groups
\[
1\longrightarrow G^*(S)/G^*(S)^n\longrightarrow
\Ext_S(G,\bmu_n)\longrightarrow\Ext_S(G,\Gm)[n]
\longrightarrow1,
\]
and similarly with $\Ext^1$ replaced by $\Ext$.
\end{lemma}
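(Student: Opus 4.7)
The plan is to work directly with the description of $\bmu_n$-extension data from \S\ref{mu-extensions}, realising the right-hand map as the pushout along the inclusion $\bmu_n\hookrightarrow\Gm$ and constructing the left-hand map explicitly, rather than invoking a long exact sequence for $\Ext_S(G,-)$ applied to the Kummer sequence (which would require its own justification in this non-abelian setting).

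The right-hand map $\Ext_S(G,\bmu_n)\to\Ext_S(G,\Gm)$ sends $(T,\lambda,\tau)\in\cE(G,\bmu_n)$ to $(T,\tau)\in\cE(G,\Gm)$, i.e., simply forgets the trivialization~$\lambda$ of~$T^{\otimes n}$.  The compatibility diagram in \S\ref{mu-extensions} says precisely that $\lambda$ upgrades to an isomorphism $(\Gmover G,\tau_0)\isom(T^{\otimes n},\tau^{\otimes n})$ of $\Gm$-extension data, so $n[T,\tau]=0$ in $\Ext_S(G,\Gm)$ and the image lies in $\Ext_S(G,\Gm)[n]$.  Surjectivity onto the $n$-torsion is the converse: any class in $\Ext_S(G,\Gm)[n]$ is represented by some $(T,\tau)$ for which an isomorphism $(\Gmover G,\tau_0)\isom(T^{\otimes n},\tau^{\otimes n})$ exists, and any such isomorphism furnishes a compatible~$\lambda$.

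For the left-hand map, I send $\chi\in G^*(S)=\Hom_S(G,\Gm)$ to the class of $(\Gmover G,\chi,\tau_0)$, where $\chi\in\Gm(G)$ is viewed as the ``multiplication by~$\chi$'' trivialization of $\Gmover G^{\otimes n}=\Gmover G$.  The condition $d^1\chi=1$ characterising $G^*(S)$ inside $\Gm(G)$ is exactly the compatibility demanded in \S\ref{mu-extensions}; and if $\chi=\chi'^n$, then multiplication by~$\chi'$ is a $\bmu_n$-isomorphism from $(\Gmover G,\chi,\tau_0)$ to the trivial object, so the assignment descends to $G^*(S)/G^*(S)^n$.  (Equivalently, this is the connecting map of the Kummer sequence, sending $\chi$ to the pullback of $\ses{\bmu_n}{\Gm}{\Gm}$ along $\chi$.)

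Exactness in the middle and injectivity of the left-hand map both reduce to the same bookkeeping.  Given $(T,\lambda,\tau)$ with $(T,\tau)$ trivial as a $\Gm$-extension datum, fix an isomorphism $\phi\colon(T,\tau)\isom(\Gmover G,\tau_0)$ and transport $\lambda$ via $\phi^{\otimes n}$ to an element $\chi\in\Gm(G)$; the compatibility with $\tau$ forces $d^1\chi=1$, so $\chi\in G^*(S)$.  Any two such $\phi$ differ by an automorphism of $(\Gmover G,\tau_0)$, i.e., an element $\psi\in G^*(S)$, which modifies $\chi$ by $\psi^n$ because $\psi^{\otimes n}$ acts on $\Gmover G^{\otimes n}=\Gmover G$ as multiplication by~$\psi^n$.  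The resulting class in $G^*(S)/G^*(S)^n$ is therefore well defined and inverts the left-hand map.  The commutative variant is identical since every construction above preserves the symmetry condition $\tau^\op=\tau$ from the preceding subsection ($\tau_0^\op=\tau_0$ and $(\Gmover G,\chi,\tau_0)$ is automatically commutative).  The main obstacle is this last twist-by-$n$-th-powers calculation, which requires unpacking how the $n$-fold tensor power functor acts on automorphisms of the trivial $\Gm$-extension datum; everything else is formal given the framework of \S\S\ref{ext-to-datum}--\ref{datum-to-ext} and \ref{mu-extensions}.
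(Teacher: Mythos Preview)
Your proof is correct and follows essentially the same approach as the paper: both define the maps via the explicit $\bmu_n$-extension-datum description from \S\ref{mu-extensions} (the left-hand map sending $\chi\in G^*(S)$ to $(\Gmover{G},\chi,\tau_0)$, the right-hand map forgetting~$\lambda$) and then verify exactness by hand. The paper's proof simply states the two maps and writes ``one now verifies that this gives the desired short exact sequence''; you have carried out that verification in full, including the key observation that automorphisms of the trivial $\Gm$-extension datum are exactly $G^*(S)$ and act on $\lambda$ by $n$-th powers.
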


\begin{proof}
We construct a sequence as follows.  Representing $\bmu_n$-torsors as
above, we define a map $G^*(S)\to\Ext_S(G,\Gm)$ sending an element
$\lambda\in G^*(S)$ to the class of
$((\Gmover{G},\tilde\lambda),\tau_0)$, where the isomorphism
$\tilde\lambda\colon\Gmover{G}\isom\Gmover{G}^{\otimes n}$ is
multiplication by $\lambda$ (viewing $\lambda$ as an element of
$\Gm(G)$ and identifying $\Gmover{G}^{\otimes n}$ with~$\Gmover{G}$)
and $\tau_0$ is the canonical isomorphism
$\Gmover{G^2}\isom\delta^1\Gmover{G}$.  Furthermore, we define a map
$\Ext_S(G,\bmu_n)\to\Ext_S(G,\Gm)$ by sending $((T,\lambda),\tau)$ to
$(T,\tau)$.  One now verifies that this gives the desired short exact
sequence.
\end{proof}

\begin{remark}
Short exact sequences analogous to those in Lemma~\ref{lemma:seq-mu}
can be constructed from the long exact sequences obtained by applying
suitable derived functors to the Kummer sequence
\[
1\longrightarrow\bmu_n\longrightarrow\Gm\morphism{n}
\Gm\longrightarrow1
\]
on~$S_\fppf$.  An argument of Demazure and Gabriel
\cite[III.6.1.10]{Demazure-Gabriel} shows that these agree with the
exact sequences from Lemma~\ref{lemma:seq-mu}, at least up to a sign.
\end{remark}

\section{From extension data to $G^*$-torsors}

\label{sec:Ext-to-H1}

Let $G$ be a finite locally free commutative group scheme over a
scheme~$S$, and let $G^*$ denote its Cartier dual.  By a theorem of
Chase \cite[Theorem 16.14]{Chase-Sweedler}, generalised by
Shatz~\cite{Shatz} and Waterhouse~\cite{Waterhouse}, there is a
canonical isomorphism
\begin{equation}
\label{eq:iso-Chase}
\H^1(S_\fppf,G^*)\isom\Ext^1_S(G,\Gm).
\end{equation}

The explicit description of $\Ext^1_S(G,\Gm)$ given in the previous
section leads to the following explicit description of $G^*$-torsors.
For simplicity, we describe the case where $S$ is affine, say $S=\Spec
R$.  Then $G$ and $G^*$ are also affine, say
\[
G=\Spec B\quad\text{and}\quad G^*=\Spec B^\vee
\]
where $B$ is a finite locally free commutative and cocommutative Hopf
algebra over~$R$ and
\[
B^\vee=\Hom_{R\text{-Mod}}(B,R)
\]
is the Hopf algebra dual to~$B$.  We write $\mu$ for the
comultiplication map $B\to B\otimes_R B$.  Furthermore, $\Gm$-torsors
on~$G$ correspond to invertible $B$-modules, which are locally trivial
for the Zariski topology.  In particular, we may identify
$\H^1(G_\fppf,\Gm)$ with the Picard group $\Pic G$ of invertible
$B$-modules.

Consider a $\Gm$-extension datum $(U,\tau)$ on~$G$ defining a
commutative extension, where $B$ is now an invertible $B$-module and
$\tau$ is a trivialisation (given by a generating section, for
example) of the invertible $(B\otimes_R B)$-module
\[
(U\otimes_R B)\tensor{B\otimes_R B}(\mu^* U)^\vee
\tensor{B\otimes_R B}(B\otimes_R U)
\cong
(U\otimes_R U)\tensor{B\otimes_R B}(\mu^* U)^\vee.
\]
The morphism $m_\tau$ from~\eqref{eq:m-tau} corresponds to an
$R$-algebra homomorphism
\[
\mu_\tau\colon U\to U\otimes_R U
\]
satisfying $\mu_\tau(bu)=\mu(b)\mu_\tau(u)$ for all $b\in B$ and $u\in
U$.  Following Chase's construction in \cite[proof of
  Theorem~16.14]{Chase-Sweedler}, we obtain the following description
of the $G^*$-torsor corresponding to $(U,\tau)$.  The finite locally
free $R$-module
\[
U^\vee=\Hom_{R\text{-Mod}}(U,R)
\]
equipped with the $R$-bilinear map $U^\vee\times U^\vee\to U^\vee$
obtained by dualising $\mu_\tau$ is a commutative $R$-algebra, and the
$R$-linear map
\begin{equation}
\label{eq:alpha}
\alpha\colon U^\vee\to B^\vee\otimes_R U^\vee
\end{equation}
obtained by dualising the $B$-module structure on~$U$ defines a
$B^\vee$ comodule structure on~$U^\vee$.  The corresponding $S$-scheme
$X=\Spec U^\vee$ together with the morphism $\Spec\alpha\colon
G^*\times X\to X$ is then the desired $G^*$-torsor.

\section{Computational aspects}

\label{sec:computational}

We will now outline how the methods of this article can be used to do
explicit calculations with extensions and torsors under the assumption
that we can represent and compute with various more basic objects; see
Assumption~\ref{assumptions} below.  In \S\ref{localised-orders}, we
show that these assumptions are fulfilled for finite locally free
commutative group schemes over a localised order in a product of
number fields.

The algorithms described below have been partially implemented as part
of the author's software package \cite{dual-pairs-package} for
computing with finite group schemes in SageMath \cite{sagemath}.

\subsection{Presentations of finitely generated Abelian groups}

We briefly describe the tools that we will use for computing with
finitely generated Abelian groups; see Cohen \cite[\S4.1]{CohenAdv}
for details.

Let $A$ be a finitely generated Abelian group.  We assume that we have
a way of computationally representing elements of~$A$ and performing
the multiplication and inversion in~$A$.  (We allow for the
possibility that an element of~$A$ has several different computational
representions.)  By a \emph{presentation of~$A$} we mean non-negative
integers $r$ and~$k$, integers $d_1,\ldots,d_k\ge2$ with $d_1\mid
d_2\mid\cdots\mid d_k$ together with mutually inverse group
isomorphisms
\[
\exp_A\colon B\isom A,\quad
\log_A\colon A\isom B
\]
given by algorithms, where $B=\Z^r\oplus\bigoplus_{i=1}^k \Z/d_i\Z$.
We view $\log_A$ as a discrete logarithm function for~$A$.  By an
\emph{algorithm for finding linear relations in~$A$} we mean an
algorithm that given $a_1,\ldots,a_n\in A$ outputs the kernel of the
group homomorphism $\Z^n\to A$ sending the $i$-th standard basis
element to $a_i$.  Note that having a presentation for~$A$ is
equivalent to having a finite set of generators for~$A$ together with
an algorithm for finding linear relations in~$A$.  Furthermore, if we
can find linear relations, then we can compare elements: two elements
$a,a'\in A$ are equal if and only if the homomorphism $\Z\to A$
sending $1$ to $a'a^{-1}$ is trivial.

Let $f\colon A\to A'$ be a homomorphism of finitely generated Abelian
groups as above.  Assume that we can evaluate $f$ using the given
computational representation of elements of $A$ and~$A'$.  If we have
presentations of $A$ and~$A'$, we can compute a matrix for~$f$ with
respect to these presentations using $\exp_A$ and $\log_{A'}$.  From
such a matrix, we can compute presentations for the kernel and
cokernel of~$f$.  Note that to compute a presentation for the kernel
of~$f$, we do not need a presentation for $A'$; it suffices to have a
presentation for~$A$ and an algorithm for finding linear relations
in~$A'$.  Similarly, to compute a presentation for the cokernel
of~$F$, it suffices to have a presentation for~$A'$ and a finite set
of generators of~$A$.

\subsection{Computing extension class groups}

\label{computing-Ext}

Let $G$ be a finite locally free group scheme over a scheme~$S$, and
let $F$ be a sheaf of Abelian groups on~$S$ that is representable,
$\fppf$ and affine over~$S$.

\begin{assumption}
\label{assumptions}
We make the following computational assumptions about the group
scheme~$G$ and the sheaf $F$:
\begin{itemize}
\item The groups $F(G^i)$ (for $i\in\{1,2,3\}$) and
  $\H^1(G^i_\fppf,F)$ (for $i\in\{1,2\}$) are finitely generated.
\item We have computational representations for elements of $F(G)$,
  $F(G^2)$ and $F(G^3)$, and we can perform multiplication and
  inversion in these groups.
\item We have a finite set of generators for $F(G)$, a presentation of
  $F(G^2)$ and an algorithm for finding linear relations in $F(G^3)$.
\item We have computational representations for $F$-torsors on $G$,
  $G^2$ and $G^3$, and for isomorphisms between such torsors.
\item Given two $F$-torsors $T,T'$, we can compute $T\otimes T'$, and
  given trivialisations of $T$ and~$T'$, we can compute the resulting
  trivialisation of $T\otimes T'$; similarly for dual torsors.
\item Given an $F$-torsor $T$ on $G^2$ that is known to be trivial, we
  can find a trivialisation $F_{G^2}\isom T$.
\item Given an $F$-torsor $T$ on $G^3$ and an $F$-torsor automorphism
  $f\colon T\isom T$, we can find the unique element $u_f\in F(G^3)$
  such that $f$ equals multiplication by $u_f$.
\item We have a presentation for $\H^1(G_\fppf,F)$, and we can find
  linear relations in $\H^1(G^2_\fppf,F)$, using the given
  computational representation of $F$-torsors to represent elements of
  these groups.
\item We have algorithms for computing the various group homomorphisms
  and functors defined in \S\ref{simplicial}.
\end{itemize}
\end{assumption}

We use these assumptions and the exact sequence from
Theorem~\ref{th:exact-seq} to compute a presentation for $\Ext_S(G,F)$
as follows:
\begin{itemize}
\item Compute a presentation for $\HH^2(G,F)$ as the second cohomology
  group of the complex \eqref{eq:hochschild-complex}.
\item Compute a matrix for the homomorphism
  $d^1\colon\H^1(G_\fppf,F)\to\H^1(G^2_\fppf,F)$.
\item Compute a presentation for the group $K(G,F)=\ker d^1$.
\item Compute a matrix for the homomorphism $\xi_{G,F}$ from
  \eqref{eq:trg}.
\item Use Cohen's algorithm for computing a presentation for the
  second term in a left four-term exact sequence
  \cite[\S4.1.7]{CohenAdv} to compute a presentation for
  $\Ext_S(G,F)$.
\end{itemize}
In the last step, we use the description of the map
$\HH^2(G,F)\to\Ext_S(G,F)$ given in \S\ref{equiv-exact} to map
elements of $\HH^2(G,F)$ to $F$-extension data, and we use the
construction in Theorem~\ref{th:exact-seq} to lift elements of the
kernel of~$\xi_{G,F}$ to $F$-extension data.

We note that after computing $\Ext_S(G,F)$, we can also compute the
homomorphism \eqref{eq:Sigma} and its kernel, which is the group
$\Ext^1_S(G,F)$ of commutative extensions of $G$ by~$F$.

\begin{remark}
\label{rem:scalar}
For each $n\in\Z$, let $[n]\colon G\to G$ denote the
multiplication-by-$n$ map.  The kernel $K(G,F)$ of
$d^1\colon\H^1(G_\fppf,F)\to\H^1(G^2_\fppf,F)$ is contained in the
subgroup $\H^1(G_\fppf,F)^{(1)}$ of isomorphism classes of torsors $T$
such that for all $n\in\Z$ the torsors $[n]^* T$ and $T^{\otimes n}$
are isomorphic.  In practice, it may be useful to compute
$\H^1(G_\fppf,F)^{(1)}$ first and then to compute $K(G,F)$ as the
kernel of the restriction of $d^1$ to $\H^1(G_\fppf,F)^{(1)}$.  An
analogous remark in the context of Galois modules annihilated by a
prime number~$p$ was made by Schaefer and Stoll
\cite[Corollary~5.3]{Schaefer-Stoll}, who used this in their algorithm
for computing $p$-Selmer groups of elliptic curves.
\end{remark}

\subsection{Computing torsor class groups}

\label{computing-H1}

Let $G$ be a finite locally free and commutative group scheme over a
scheme~$S$.  We now consider the problem of computing the group
$\H^1(S_\fppf,G^*)$ of isomorphism classes of $G^*$-torsors.  In light
of the isomorphism~\eqref{eq:iso-Chase} between this group and
$\Ext^1_S(G,\Gm)$, it is natural to represent a $G^*$-torsor over~$S$
by the corresponding $\Gm$-extension datum on~$G$, and to view the
actual $G^*$-torsor (the $S$-scheme with $G^*$-action) as a
``secondary'' object to be computed from the $\Gm$-extension datum.

For simplicity, as in \S\ref{sec:Ext-to-H1}, we assume $S=\Spec R$,
$G=\Spec B$ and $G^*=\Spec B^\vee$ with $R$ a commutative ring and $B$
a finite locally free commutative cocommutative Hopf algebra over~$R$.
A $\Gm$-extension datum on~$G$ is therefore of the form $(U,\tau)$
where $U$ is an invertible $B$-module.  Using the description in
\S\ref{sec:Ext-to-H1}, the $R$-algebra structure on $U^\vee$ and the
comultiplication map \eqref{eq:alpha} can be extracted from $(U,\tau)$
using linear algebra over~$R$.

\begin{remark}
This representation of $G^*$-torsors fits very naturally into the
author's framework of \emph{dual pair of algebras} for computing with
finite group schemes \cite{dual-pairs}.  In this setting, neither the
comultiplication map~$\mu$ nor the comodule map $\alpha$ needs to be
written down explicitly.  This allows efficient computation with
$G^*$-torsors once $\H^1(S_\fppf,G^*)$ has been computed using one of
the methods described below.
\end{remark}

We now sketch two algorithms: one for computing $\H^1(S_\fppf,G^*)$,
and another for computing $\H^1(S_\fppf,G^*)[n]$ for a given positive
integer~$n$.  We assume that $S$ and~$G$ are such that our
computational assumptions \ref{assumptions} hold for the sheaf~$\Gm$
(for the first algorithm) and for the sheaf~$\bmu_n$ (for the second
algorithm).  For suitable rings~$R$, namely localised orders, this
will be justified in \S\ref{localised-orders} below.

\subsubsection*{Computing torsor class groups from $\Gm$-extensions}

The first method proceeds directly via the identification of
$\H^1(S_\fppf,G^*)$ with $\Ext^1_S(G,\Gm)$, and is conceptually more
straightforward than the method described below.  The algorithm is
simply to compute a presentation for $\Ext^1_S(G,\Gm)$ using the
algorithm from \S\ref{computing-Ext}, and then to compute, for each
extension datum $(U,\tau)$ in some finite generating set, the
resulting $R$-algebra structure on $U^\vee$ and the comodule map
$\alpha\colon U^\vee\to B^\vee\otimes_R U^\vee$.

\subsubsection*{Computing torsor class groups from $\bmu_n$-extensions}

In the second method, we replace $\Gm$ by $\bmu_n$, where $n$ is a
positive integer; this leads to an algorithm for computing the
$n$-torsion subgroup of $\Ext^1_S(G,\Gm)$ and hence of
$\H^1(S_\fppf,G^*)$.  The case where $n$ is (a divisor of) the
exponent of~$G$ is the most interesting in practice, but we do not
need this assumption.

In this approach, we first compute $\Ext^1_S(G,\bmu_n)$ using the
algorithm from \S\ref{computing-Ext}, and compute $\Ext^1_S(G,\Gm)[n]$
as the cokernel of the map $G^*(S)\to\Ext^1_S(G,\bmu_n)$ from
Lemma~\ref{lemma:seq-mu}.  We then proceed as in the first method,
using the isomorphism~\ref{eq:iso-Chase} to identify
$\Ext^1_S(G,\Gm)[n]$ with $\H^1(S_\fppf,G^*)[n]$.

\subsection{Comparison to algorithms for computing Selmer groups}

Part of the motivation behind the present work was to understand the
geometry behind existing algorithms for computing Selmer groups.  We
sketch briefly how these algorithms can be interpreted in the
framework described in this paper.

Let $E$ be an elliptic curve over a number field~$K$.  A standard way
of determining the Mordell--Weil group $E(K)$ starts by computing the
\emph{$n$-Selmer group} $\Sel_n(E)$ of the Galois cohomology group
$\H^1(K,E[n])$ for some $n\ge2$ (or more generally the Selmer group
associated with an isogeny).  Algorithms for computing these Selmer
groups were given by Schaefer and Stoll \cite{Schaefer-Stoll} (for $n$
prime) and by Cremona, Fisher, O'Neil, Simon and Stoll \cite{CFOSS-I},
among others.  These algorithms are based on mapping $\Sel_n(E)$ to a
subgroup of the Galois cohomology group $\H^1(R,\bmu_n)\cong
R^\times/(R^\times)^n$ for a certain \'etale $K$-algebra $R$.  This in
turn uses the embedding of $E[n]$ into the Galois module of functions
$E[n]\to\bmu_n$ defined by the Weil pairing; see
\cite[\S3]{Schaefer-Stoll} and \cite[\S3]{CFOSS-I}.  In
\cite[\S1.5]{CFOSS-I} it was noted that the group $\H^1(K,E[n])$
classifying $E[n]$-torsors also classifies commutative extensions of
$E[n]$ by~$\Gm$.  This point of view was used in \cite[\S3]{CFOSS-I}
to identify $\Sel_n(E)$ as a subquotient of $(R\otimes_K R)^\times$.

In this paper, we consider group schemes over more general base
schemes and use \emph{fppf} cohomology instead of Galois cohomology
together with local conditions.  The link between the two approaches
is that Selmer groups of Abelian varieties can be interpreted as
\emph{fppf} cohomology groups, as shown by \v{C}esnavi\v{c}ius
\cite[\S4]{Cesnavicius}.  Computing the $n$-Selmer group of an
elliptic curve over a number field~$K$ can therefore be viewed as
computing $\H^1(S_\fppf,E[n])$, with $S$ the spectrum of the ring of
$\Sigma$-integers in~$K$ for a finite set $\Sigma$ of places of~$K$,
followed by computing a subgroup defined by local conditions at the
places in~$\Sigma$.

Of the two methods given in \S\ref{computing-H1} for computing
$\H^1(S_\fppf,G^*)$ (note that if $G$ is the $n$-torsion of an
elliptic curve, then we can identify $G$ with $G^*$ via the Weil
pairing), the second method is closest to the algorithms of
\cite{Schaefer-Stoll} and ~\cite{CFOSS-I}.  This second method also
has certain (potential) practical advantages over the first:
\begin{enumerate}
\item Computing presentations for the groups $\bmu_n(G^i)$ is easier
  than for $\Gm(G^i)$, because one only needs to know the $n$-th roots
  of unity rather than the full unit groups of the algebras in
  question.  The same holds for finding linear relations in
  $\HH^3(G,\bmu_n)$ as opposed to $\HH^3(G,\Gm)$.
\item At least in certain cases, it may be easier to compute the
  subgroup $K(G,\bmu_n)$ of $\H^1(G_\fppf,\bmu_n)$ than to compute the
  subgroup $K(G,\Gm)$ of $\H^1(G_\fppf,\Gm)$.  In the case where $p$
  is an odd prime number and $E$ is an elliptic curve over a number
  field~$K$, Schaefer and Stoll \cite[\S5]{Schaefer-Stoll} showed that
  the Galois cohomology group $\H^1(K,E[p])$ and the $p$-Selmer group
  of~$E$ can be computed as certain subgroups of the kernel of a
  homomorphism $A^\times/(A^\times)^p\to B^\times/(B^\times)^p$, where
  $A$ and~$B$ are $K$-algebras of degree $p^2-1$.  Translating this to
  our setting, and taking $n$ to be an odd prime number~$p$ and $G$ to
  be a group scheme over~$S$ annihilated by~$p$, we may wonder if
  $K(G,\bmu_p)$ can similarly be computed as the kernel of a
  homomorphism $\H^1(G_\fppf,\bmu_p)^{(1)}\to\H^1(X_\fppf,\bmu_p)$ (see
  Remark~\ref{rem:scalar} for the definition of the left-hand side)
  for a suitable $S$-scheme $X$ of substantially smaller degree than
  that of $G^2$.
\end{enumerate}

\subsection{Results over localised orders}

\label{localised-orders}

We conclude by showing how the algorithms from this paper can be
implemented concretely for suitable base schemes, based on the
computation of unit groups and Picard groups of (localisations of)
orders in number fields.

\begin{definition}
A (reduced) \emph{order} is a reduced commutative ring that is free of
finite rank as a $\Z$-module.
\end{definition}

An order is in particular Noetherian and one-dimensional, but not
necessarily regular, and is of finite index in a product of maximal
orders of number fields.

\begin{definition}
A \emph{localised order} is a ring of the form $R_\Sigma$, where $R$
is an order, $\Sigma$ is a finite set of maximal ideals of~$R$, and
$R_\Sigma$ is the coordinate ring of the complement of $\Sigma$ in
$\Spec R$.
\end{definition}

\begin{example}
Let $K$ be a number field, and let $\Sigma$ be a finite set of places
of~$K$.  Then the ring $\Z_{K,\Sigma}$ of $\Sigma$-integers in~$K$ is
a localised order.
\end{example}

Let $R$ be a localised order, and let $G$ be a finite locally free
group scheme over $S=\Spec R$.  Then each $G^i$ is the spectrum of a
finite locally free $R$-algebra $B^i$.  Furthermore, $G^i$ is
generically \'etale over~$S$, so $B^i$ is again a localised order.

There are algorithms for computing presentations of unit groups and
Picard groups of orders; see Cohen \cite[\S6.5]{Cohen} for maximal
orders in number fields, Kl\"uners and Pauli \cite{Klueners-Pauli} for
general orders in number fields and Marseglia
\cite[Remark~3.8]{Marseglia} for arbitrary orders.  These algorithms
can be extended to localised orders as in \cite[\S7.4]{CohenAdv}.  If
$R'$ is a localised order, we represent $\Gm$-torsors (or invertible
sheaves) on~$\Spec R'$ by invertible fractional ideals of~$R'$.

Similarly, if $R'$ is a localised order and $n$ is a positive integer,
then as in \S\ref{mu-extensions} we represent $\bmu_n$-torsors over
$S'=\Spec R'$ by pairs $(J,x)$ where $J$ is a fractional ideal of~$R'$
and $x$ is a generator of $J^n$.  By the long exact cohomology
sequence obtained from the Kummer sequence, the group
$\H^1(S'_\fppf,\bmu_n)$ of isomorphism classes of $\bmu_n$-torsors fits
in a short exact sequence
\[
1\longrightarrow R'^\times/(R'^\times)^n\longrightarrow
\H^1(S'_\fppf,\bmu_n)\longrightarrow(\Pic R')[n]\longrightarrow1,
\]
which we can use to compute a presentation of $\H^1(S'_\fppf,\bmu_n)$.

The above implies that if $R$ is a localised order and $G$ is a finite
locally free group scheme over~$S=\Spec R$, then our computational
assumptions \ref{assumptions} are fulfilled both for the sheaf~$\Gm$
and for the sheaf~$\bmu_n$.  We can therefore apply the method from
\eqref{computing-Ext} and both methods from \S\ref{computing-H1} to
compute presentations for the groups $\Ext_S(G,\Gm)$,
$\Ext^1_S(G,\Gm)$ and $\H^1(S_\fppf,G^*)$ (in the case of the second
method, for the $n$-torsion of these groups).

Finally, we consider two finite locally free commutative group schemes
$G$ and~$F$ over~$S$.  Then we can compute $\Ext_S(G,F)$ using the
following ``bootstrap'' argument.  We can compute presentations for
the finite Abelian groups $F(G^i)$ for $i\in\{1,2,3\}$; this comes
down to computing homomorphisms between subrings of products of number
fields.  Furthermore, we can compute $\H^1(G^i_\fppf,F)$ for $i=1$ and
$i=2$ as described above (with $(G^i,F^*)$ in place of $(S,G)$)
because the $G^i$ are again spectra of localised orders.  Finally,
using the representation of $F$-torsors as $\Gm$-extension data allows
us to perform the remaining tasks in Assumption~\ref{assumptions}.
Therefore our computational assumptions are fulfilled for the group
scheme~$G$ over~$S$ and the sheaf $F$, and we can use the algorithm
from \S\ref{computing-Ext} to compute the group $\Ext_S(G,F)$.

\bibliographystyle{alpha}
\bibliography{biblio}

\end{document}